\newtheorem{theorem}{Theorem}[section]
\newtheorem{lemma}{Lemma}[section]
\newtheorem{proposition}{Proposition}[section]
\newtheorem{conjecture}{Conjecture}[section]
\newcommand{\qed}{\hfill \ensuremath{\Box}}
\newenvironment{proof}
      {\noindent{\bf Proof:}\hspace{1mm}}
      {\hfill$\Box$\medskip}
\newenvironment{proofof}[1]
      {\noindent{\bf Proof of #1:}\hspace{1mm}}
      {\hfill$\Box$\medskip}
\def\Ddots{\mathinner{\mkern1mu\raise\p@
\vbox{\kern7\p@\hbox{.}}\mkern2mu
\raise4\p@\hbox{.}\mkern2mu\raise7\p@\hbox{.}\mkern1mu}}
\def\BBE{\mathbb{E}}
\title{\vspace{-0.7cm} Chromatic number, clique subdivisions, and the conjectures of
Haj\'os and Erd\H{o}s-Fajtlowicz}
\author{Jacob Fox\thanks{Department of
Mathematics, MIT, Cambridge, MA 02139-4307. Email: {\tt
fox@math.mit.edu}. Research supported by a Simons Fellowship.}
\and
Choongbum Lee\thanks{Department of Mathematics, UCLA,
Los Angeles, CA, 90095. Email: {\tt choongbum.lee@gmail.com}.
Research supported in part by a Samsung Scholarship.}
\and
Benny Sudakov\thanks{Department of Mathematics,
UCLA,  Los Angeles, CA 90095. Email: {\tt bsudakov@math.ucla.edu}. Research
supported in part by NSF grant DMS-1101185, NSF CAREER award DMS-0812005 and by a
USA-Israeli BSF grant.}}
\date{}
\begin{document}
\maketitle

\begin{abstract}
For a graph $G$, let $\chi(G)$ denote its chromatic number and
$\sigma(G)$ denote the order of the largest clique subdivision in $G$. Let
$H(n)$ be the maximum of $\chi(G)/\sigma(G)$ over all $n$-vertex
graphs $G$. A famous conjecture of Haj\'os from 1961 states that
$\sigma(G) \geq \chi(G)$ for every graph $G$. That is, $H(n) \leq 1$
for all positive integers $n$. This conjecture was disproved by
Catlin in 1979. Erd\H{o}s and Fajtlowicz further showed
by considering a random graph that $H(n) \geq cn^{1/2}/\log n$ for
some absolute constant $c>0$. In 1981 they conjectured that this bound
is tight up to a constant factor in that there is some absolute constant $C$ such that
$\chi(G)/\sigma(G) \leq Cn^{1/2}/\log n$ for
all $n$-vertex graphs $G$. In this paper we prove the Erd\H{o}s-Fajtlowicz
conjecture. The main ingredient in our proof, which might be of independent interest, 
is an estimate on the order of the largest clique subdivision which one can find in every graph on $n$ vertices with
independence number $\alpha$.

\end{abstract}

\section{Introduction}
A {\em  subdivision} of a graph $H$ is any graph formed by replacing edges of $H$ by internally vertex disjoint paths.
This is an important notion in graph theory, e.g., the celebrated theorem of Kuratowski uses it to
characterize planar graphs. For a graph $G$, we let $\sigma(G)$ denote the largest integer $p$ such that $G$ 
contains a subdivision of a complete graph of order $p$.
Clique subdivisions in graphs have been extensively studied and there are many results which give 
sufficient conditions for a graph $G$ to have large $\sigma(G)$.
For example, Bollob\'as and Thomason \cite{BT},
and Koml\'os and Szemer\'edi \cite{KS} independently proved that
every graph of average degree at least $d$ has $\sigma(G) \ge c d^{1/2}$
for some absolute constant $c$. Motivated by a conjecture of Erd\H{o}s, in \cite{AKS} the authors further showed that
when $d= \Omega(n)$ in the above subdivision one can choose all paths to have length two.
Similar result for subdivisions of general graphs with $O(n)$ edges (a clique of order $O(\sqrt{n})$ clearly satisfies this)
was obtained in \cite{FS0}.

For a given graph $G$, let $\chi(G)$ denote 
its chromatic number. A famous conjecture made by Haj\'os 
in 1961 states that $\sigma(G) \ge \chi (G)$. 
Dirac \cite{Di} proved that this conjecture is true for all $\chi(G) \le 4$,
but in 1979, Catlin \cite{Ca} disproved the conjecture for all $\chi(G) \ge 7$. 
Subsequently, several researchers further studied this problem. 
On the negative side, by considering random graphs, Erd\H{o}s and Fajtlowicz \cite{EF} in 1981 showed that 
the conjecture actually fails for almost all graphs. On
the positive side, recently K\"uhn and Osthus \cite{KO}
proved that all graphs of girth at least 186 satisfy Haj\'os' conjecture.
Thomassen \cite{Th} studied the relation of Haj\'os' conjecture
to several other problems of graph theory such as
Ramsey theory, maximum cut problem, etc., and discovered many interesting
connections.

In this paper, we revisit Haj\'os' conjecture and study to what extent
the chromatic number of a graph can exceed the order of its largest clique subdivision.
Let $H(n)$ denote the maximum of $\chi(G)/\sigma(G)$ over all $n$-vertex graphs $G$.
The example of graphs given by Erd\H{o}s and Fajtlowicz 
which disprove Haj\'os' conjecture in fact has
$\sigma(G) = \Theta(n^{1/2})$ and $\chi(G) = \Theta(n/\log n)$. Thus 
it implies that $H(n) = \Omega(n^{1/2}/\log n)$. 
In \cite{EF}, Erd\H{o}s and Fajtlowicz conjectured that this bound is tight up to a constant factor so that $H(n) = O(n^{1/2}/\log n)$. Our first theorem verifies this conjecture.
 
\begin{theorem}\label{main}
There exists an absolute constant $C$ such that $H(n) \leq Cn^{1/2}/\log n$ for $n \geq 2$.
\end{theorem}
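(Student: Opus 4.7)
I will prove the theorem by combining two lower bounds on $\sigma(G)$ via a dichotomy on the chromatic number; throughout, write $\chi = \chi(G)$, $\sigma = \sigma(G)$, $\alpha = \alpha(G)$ for an $n$-vertex graph $G$. Since any graph contains a subgraph of minimum degree at least $\chi-1$, the Bollob\'as--Thomason and Koml\'os--Szemer\'edi theorem cited in the introduction gives
\[
\sigma(G) \geq c_0 \sqrt{\chi(G)}.
\]
If $\chi(G) \leq Kn/\log^2 n$ for an appropriate absolute constant $K$, this immediately yields $\chi/\sigma \leq \sqrt{\chi}/c_0 \leq C\sqrt{n}/\log n$ and we are done. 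Hence we may assume the high-chromatic regime $\chi(G) > Kn/\log^2 n$, in which case the inequality $\chi \geq n/\alpha$ forces the independence number to be polylogarithmic: $\alpha(G) < \log^2 n/K$.

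\textbf{Second bound, via independence number.} For this small-$\alpha$ regime we invoke the paper's main new estimate, a lower bound on $\sigma(G)$ in terms of $n$ and $\alpha$ alone. Since the Erd\H{o}s--Fajtlowicz extremal example has $\alpha = \Theta(\log n)$ and $\sigma = \Theta(\sqrt n)$, the bound must recover $\sigma \geq c\sqrt n$ in that case; I expect a bound of roughly the shape
\[
\sigma(G) \geq c_1 \sqrt{\frac{n\log(n/\alpha)}{\alpha}},
\]
possibly with a different scaling for very small $\alpha$ (where $\sigma$ must grow like $n$ itself to handle the near-complete case). Combined with the first bound --- with the thresholds where the two estimates take over calibrated so that the maximum of them always dominates $\chi\log n/(C\sqrt n)$ --- this delivers $\chi/\sigma \leq C\sqrt n/\log n$.

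\textbf{Proving the Key Lemma --- the main obstacle.} The strategy is Koml\'os--Szemer\'edi's expander-based approach to clique subdivisions. Since $\alpha(G) = \alpha$, Tur\'an's theorem applied to the $K_{\alpha+1}$-free graph $\overline{G}$ shows that every induced subgraph $G[S]$ with $|S| \geq \alpha+1$ has average degree at least $(|S|-1)/\alpha$, a uniform density driven purely by low $\alpha$. I would use this either to (i) pass to a proper subgraph of strictly higher density and recurse with the same upper bound on $\alpha$, or (ii) verify that $G$ is a sufficiently good expander (every moderately sized set has large external neighborhood) and then apply Koml\'os--Szemer\'edi's greedy path-linking procedure to construct $\binom{t}{2}$ internally vertex-disjoint short paths among $t = \Omega(\sqrt{(n/\alpha)\log(n/\alpha)})$ carefully chosen branch vertices. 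The delicate point is ensuring that (i) preserves the low independence number and that (ii) can extract sufficient expansion from only the Tur\'an density; a clean way to handle both is to induct on $n$, recursing on the smaller side of any sparse cut and otherwise running the expander argument directly.
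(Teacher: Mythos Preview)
Your dichotomy contains a genuine logical error. From $\chi \geq n/\alpha$ (each colour class has size at most $\alpha$) one obtains $\alpha \geq n/\chi$, a \emph{lower} bound on $\alpha$. So the assumption $\chi > Kn/\log^2 n$ does \emph{not} force $\alpha < (\log^2 n)/K$; it only gives the trivial $\alpha \geq n/\chi$, which could be as small as $1$. Concretely, take $G$ to be the disjoint union of a clique on $n/2$ vertices and an independent set on $n/2$ vertices: here $\chi = n/2$ is huge, yet $\alpha = n/2$ is also huge, so your ``small-$\alpha$ regime'' is never entered and your Key Lemma gives nothing useful. Of course $\sigma$ is large in this particular example, but the point is that no bound depending only on $n$ and $\alpha$ can finish the argument once you are in the high-$\chi$ case.

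This is exactly why the paper does \emph{not} argue by a single dichotomy. Instead it inducts on $n$: if $\alpha \geq 4n/\chi$, delete a maximum independent set to obtain $G'$ on $n' \leq n - 4n/\chi$ vertices with $\chi(G') \geq \chi - 1$, and apply the induction hypothesis to $G'$; a short calculation shows the loss is absorbed. Only when $\alpha < 4n/\chi$ does one invoke the lower bound on $\sigma$ in terms of $(n,\alpha)$. The induction step is precisely the missing ingredient that handles graphs where both $\chi$ and $\alpha$ are large simultaneously.

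Two smaller points. First, your conjectured shape $\sigma \geq c_1\sqrt{(n/\alpha)\log(n/\alpha)}$ is too strong for very small $\alpha$: for $\alpha=2$ the truth is $\Theta(n^{2/3})$, not $\Theta(\sqrt{n\log n})$; the paper's bound there is $c_1 n^{\alpha/(2\alpha-1)}$. Second, the paper's proof of the $(n,\alpha)$ bound does not go through Koml\'os--Szemer\'edi expansion and greedy linking; it uses dependent random choice to find a large set with many internally disjoint length-$4$ paths between every pair, a density-increment lemma to find a near-clique inside it, and an Erd\H{o}s--Szemer\'edi type lemma to reduce the independence number when the graph is sparse. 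Your expander sketch might be made to work, but as written it is too vague to assess.
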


The proof shows that we may take $C=10^{120}$, although we do not try to optimize this constant. For the random graph $G=G(n,p)$ with $0<p<1$ fixed, Bollob\'as and Catlin \cite{BC} determined $\sigma(G)$ asymptotically almost surely and later Bollob\'as \cite{Bo} determined $\chi(G)$ asymptotically almost surely. These results imply, by picking the optimal choice $p=1-e^{-2}$, the lower bound $H(n) \geq (\frac{1}{e\sqrt{2}}-o(1))n^{1/2}/\log n$. 

 For a graph $G$, let $\alpha(G)$ denote its independence number. Theorem \ref{main} actually follows from the study of 
the relation between $\sigma(G)$ and $\alpha(G)$, which might be of independent interest.
Let $f(n,\alpha)$ be the minimum of $\sigma(G)$ over all graphs $G$
on $n$ vertices with $\alpha(G) \le \alpha$. 

\begin{theorem}\label{main1}
There exist absolute positive constants $c_1$ and $c_2$ such that the following holds.
\begin{enumerate}
	\setlength{\itemsep}{1pt}
  \setlength{\parskip}{0pt}
  \setlength{\parsep}{0pt}
\item If $\alpha<2\log n$, then $f(n,\alpha) \ge c_1 n^{\frac{\alpha}{2\alpha-1}}$, and
\item if $\alpha = a\log n$ for some $a \ge 2$, then $f(n,\alpha) \ge c_2 \sqrt{\frac{n}{a\log a}}$.
\end{enumerate}
\end{theorem}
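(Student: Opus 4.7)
My plan is to establish both parts of Theorem~\ref{main1} by combining two classical ingredients: the Bollob\'as-Thomason / Koml\'os-Szemer\'edi theorem, which guarantees $\sigma(G)\ge c\sqrt{d}$ whenever $G$ has average degree $d$, and Ramsey-/Tur\'an-type structure forced by $\alpha(G)\le\alpha$. The immediate combination already gives a baseline: since $\bar G$ is $K_{\alpha+1}$-free, Tur\'an's theorem yields $e(G)\ge \binom{n}{2}/\alpha$, so the average degree exceeds $(n-1)/\alpha$ and hence $\sigma(G)\ge c\sqrt{n/\alpha}$. This settles part 2 up to a factor of $\sqrt{\log n/\log a}$ and part 1 up to a polynomial factor; the task is to close both gaps.

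For part 2, I would pass to a carefully chosen induced subgraph $G[U]$ in which the independence number drops sharply relative to the vertex count. Concretely, the aim is $|U|\ge cn/\log a$ with $\alpha(G[U])\le Ca$, because Koml\'os-Szemer\'edi inside $G[U]$ then yields $\sigma(G)\ge c'\sqrt{|U|/\alpha(G[U])}=\Omega(\sqrt{n/(a\log a)})$. I would construct $U$ by iteratively replacing the current set $S$ with the non-neighborhood $S\setminus N[v]$ of a suitably chosen $v\in S$: this drops the independence number by one, while the size shrinks by the fraction of non-neighbors of $v$ inside $S$. The na\"ive greedy analysis only gives $|U|\sim n/\log n$; the improvement I need is that one can always pick $v$ whose non-neighborhood in $S$ has density at most $1-c/a$, making the cumulative loss geometric in $a$ rather than in $\alpha=a\log n$.

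For part 1, the target $n^{\alpha/(2\alpha-1)}$ strictly exceeds $\sqrt{n/\alpha}$, so density alone is insufficient. Here I would start from a Ramsey clique $K_k\subseteq G$ of order $k=\Omega(\alpha n^{1/\alpha})$ (guaranteed by $\alpha(G)\le\alpha$) and augment it by $p-k$ further branch vertices, each connected to the rest through internally disjoint paths of length at most two, using common neighbors in $V(G)\setminus V(K_k)$ as internal vertices. A double count on $\sum_w\binom{d(w)}{2}$ shows that the average pair of vertices in $G$ has at least $n/\alpha^2$ common neighbors, providing the required supply. The optimal exponent $\alpha/(2\alpha-1)$ should emerge from balancing the "free" length-one edges inside the Ramsey clique against the budget of roughly $n$ internal vertices for the remaining length-two paths.

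The main technical obstacle, especially in part 1, is to maintain internal-vertex disjointness across all $\binom{p}{2}$ paths simultaneously. In the target regime the number of paths exceeds the number of available internal vertices by a polynomial factor $n^{1/(2\alpha-1)}$, so almost all paths must be length-one edges coming from the Ramsey clique, and only a small fraction can afford length-two detours. I would handle this matching of non-edges to disjoint common neighbors greedily, reserving internal vertices online as branch vertices are added; correctness reduces to the density lower bound on common neighborhoods above, and this balancing of Ramsey clique size against path budget is, I expect, where the bulk of the technical work lies.
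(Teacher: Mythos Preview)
Your proposal has genuine gaps in both parts.

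\textbf{Part 1.} You start from a Ramsey clique $K_k$ with $k=\Theta(\alpha n^{1/\alpha})$ and augment to $p=\Theta(n^{\alpha/(2\alpha-1)})$ branch vertices, covering the new pairs by length-$2$ paths. But $p\gg k$: already for $\alpha=2$ one has $k=\Theta(n^{1/2})$ and $p=\Theta(n^{2/3})$, so of the $\binom{p}{2}=\Theta(n^{4/3})$ pairs only $\binom{k}{2}=\Theta(n)$ lie inside the clique; the remaining $\Theta(n^{4/3})$ pairs all need internal vertices, far exceeding your budget of $n$. Your own balancing equation $p^2-k^2\lesssim n$ forces $p=O(\sqrt{n})$, which misses the target exponent. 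The idea you are missing is to locate directly an \emph{almost}-clique $S$ of the full target size $s=\Theta(n^{\alpha/(2\alpha-1)})$ in which only $O(n)$ pairs are non-adjacent: the paper does this via the iterated non-neighbourhood argument of Lemma~\ref{snlem}, choosing the density parameter $\rho$ so that $\rho^{\alpha-1}n\approx s$ and $\rho s^2=O(d^5 n)$ simultaneously, which is precisely what produces the exponent $\alpha/(2\alpha-1)$. A second gap is that your double count $\sum_w\binom{d(w)}{2}$ gives only an \emph{average} of $n/\alpha^2$ common neighbours per pair; some non-adjacent pairs in $S$ may have none. The paper first applies a dependent random choice step (Lemma~\ref{DRC}) to pass to a large set in which \emph{every} pair has $\Omega(d^5 n)$ internally vertex-disjoint paths of length $4$, and only then extracts the almost-clique inside it.

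\textbf{Part 2.} Your iteration $S\mapsto S\setminus N[v]$ drops the independence number by exactly one per step, so reaching $\alpha(G[U])\le Ca$ from $\alpha=a\log n$ takes $\Theta(a\log n)$ steps. Even if each step retains a $(1-c/a)$-fraction of the vertices (and your stated condition ``non-neighbourhood density at most $1-c/a$'' points the wrong way for that), after $a\log n$ steps you keep at most $(1-c/a)^{a\log n}n\approx n^{1-c}$ vertices, not $cn/\log a$; the resulting bound is only $\sigma(G)\ge c'\sqrt{n^{1-c}/a}$, off by a power of $n$. The paper proceeds differently: when $d\ge c/(a\log a)$ a direct application of Theorem~\ref{BTKS} already gives $\sigma(G)\ge c'\sqrt{dn}=\Omega\bigl(\sqrt{n/(a\log a)}\bigr)$, and the sparse regime $d<c/(a\log a)$ is handled by an induction on $n$ (Lemma~\ref{sparse}) whose key step is the Erd\H{o}s--Szemer\'edi-type Lemma~\ref{ES}, which in a \emph{single} pass to a subset of size $(e/d)^{-d\alpha}N$ cuts the independence number \emph{multiplicatively} to $d\alpha$. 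That multiplicative drop is exactly what your additive one-vertex iteration cannot deliver.
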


Note that for $\alpha = 2\log n$, both bounds from the first and
second part gives $f(n,\alpha) \ge \Omega(\sqrt{n})$.  
Moreover, both parts of this theorem establish the
correct order of magnitude of $f(n,\alpha)$ for some 
range of $\alpha$. For $\alpha=2$, it can be shown that in
the triangle-free graph constructed by Alon \cite{Al},
every set of size at least $37 n^{2/3}$ contains at least $n$
edges. This implies that the complement of this graph 
has independence number 2 and the largest clique subdivision of
size $t<37n^{2/3}$. Indeed, if there is a clique subdivision of order $t \geq 37n^{2/3}$, then between each of the at least $n$ 
pairs of nonadjacent vertices among the $t$ vertices of the subdivided clique, there is at least one additional vertex along 
the path between them in the subdivision. However, this would require at least $t+n$ vertices in the $n$-vertex graph, a contradiction. On the other hand, for $\alpha = \Theta(\log n)$, 
by considering $G(n,p)$ with constant $0<p<1$, one can see that
the second part of Theorem \ref{main1} is tight up to the constant factor. Even for $\alpha=o(\log n)$, by considering
the complement of $G(n,p)$ for suitable $p \ll 1$, one can easily verify that
there exists an absolute constant $c'$ such that $f(n, \alpha) \le O(n^{\frac{1}{2} + \frac{c'}{\alpha}})$.

Theorem \ref{main1} can also be viewed as a Ramsey-type theorem which establishes an upper bound on the Ramsey number of a clique subdivision versus an independent set.

\medskip 

\noindent \textbf{Notation}. A graph $G=(V,E)$ is given by a pair
of its vertex set $V=V(G)$ and edge set $E=E(G)$. The edge density of $G$ is the ratio $|E|/{|V| \choose 2}$.
For a subset $X$ of vertices,
we use $G[X]$ to denote the induced subgraph of $G$ on
the set $X$. Throughout the
paper log denotes the natural logarithm. We systematically omit floor and ceiling signs whenever they are not crucial, for the sake of clarity of presentation. We also do not make any serious attempt to optimize absolute constants in our statements and proofs.

\section{Deducing Theorem \ref{main} from Theorem \ref{main1}}

Theorem \ref{main} is a quick corollary of Theorem \ref{main1}. 
To see this, let $C=\max\left(e^8,\frac{16}{c_1e},\frac{4}{c_2\sqrt{e}}\right)$, where $c_1, c_2$ are the constants from Theorem \ref{main1}. We will prove Theorem \ref{main} by induction on $n$.
Suppose we want to prove the claim for $n$, and
are given a graph $G$ on $n$ vertices with $\chi(G)=k$. We may assume that
$k \ge C$ (and thus $n \ge C$) as otherwise the claim is trivially true.
If $\alpha:=\alpha(G) < 4n/k$, then the bounds in Theorem \ref{main1} easily 
give us the desired bound. Indeed, consider the two cases. Let $a=\alpha/\log n$. If also 
$\alpha < 2 \log n$, so that $a<2$, 
we can use the first part of Theorem \ref{main1} to get
\[ \frac{\chi(G)}{\sigma(G)} \le \frac{k}{c_1 n^{\frac{\alpha}{2\alpha - 1}}} \le \frac{k}{c_1 n^{\frac{1}{2} + \frac{1}{4\alpha}}} 
< \frac{4n/\alpha}{c_1 n^{\frac{1}{2} + \frac{1}{4\alpha}}} 
=\frac{n^{1/2}}{\log n} \cdot \frac{4}{c_1 a e^{\frac{1}{4a}}}\leq \frac{n^{1/2}}{\log n} \cdot \frac{16}{c_1 e}
 \le C \frac{n^{1/2}}{\log n}, \]
where we used the fact that the minimum of $a e^{\frac{1}{4a}}$ in the domain $(0,2]$ occurs at $a=1/4$. 
If $\alpha(G) \geq 2 \log n$, so that $a \geq 2$, then 
by using the second part of Theorem \ref{main1} we get 
\[ \frac{\chi(G)}{\sigma(G)} \le \frac{\sqrt{a \log a} \cdot k}{c_2 \sqrt{n}} \le 
\frac{\sqrt{a \log a} \cdot 4n/(a\log n)}{c_2 \sqrt{n}} =
\frac{n^{1/2}}{\log n} \cdot \frac{4\sqrt{ \log a}}{c_2\sqrt{a}} \leq \frac{n^{1/2}}{\log n} \cdot \frac{4}{c_2 \sqrt{e}} \le C \frac{n^{1/2}}{\log n}, \]
where we used the fact that the maximum of $\frac{\log a}{a}$ occurs at $a=e$. 

Otherwise, $\alpha(G) \geq 4n/k$. By deleting a maximum independent
set, we get an induced subgraph $G'$ on $n' \leq n-4n/k$ vertices,
with chromatic number at least $k-1$, and a
clique subdivision of size at least $\sigma(G') \geq
\chi(G')/H(n') \geq (k-1)/H(n')$. 
Note that if $n' < e^2$, then $k \leq 1+n' < 9<C$, and this case 
was already settled. So we may assume $n' \geq e^2$. 
Hence, by induction on $n$, we have 
\begin{equation}\label{123abc}\frac{\chi(G)}{\sigma(G)} \leq
\frac{k}{\sigma(G')} \leq \left(\frac{k}{k-1}\right)H(n') \leq
\frac{k}{k-1} \cdot \frac{Cn'^{1/2}}{\log n'}. \end{equation} 
As the function $x^{1/2}/\log x$ is increasing for $x \geq e^2$, the right-hand side of (\ref{123abc}) is
maximized at $n' = (1 - 4/k)n$. Consequently, we have
\begin{eqnarray*} \frac{\chi(G)}{\sigma(G)}  & \leq & 
\frac{k}{k-1} \cdot \frac{Cn'^{1/2}}{\log n'} \leq \left(\frac{k}{k-1}\right)\left(1-\frac{4}{k}\right)^{1/2}\left(1+\frac{\log(1-4/k)}{\log n}\right)^{-1}\frac{Cn^{1/2}}{\log n} \\ & \leq & \left(1-\frac{1}{k}\right)\left(1+\frac{\log(1-4/k)}{\log n}\right)^{-1}\frac{Cn^{1/2}}{\log n} \leq \left(1-\frac{1}{k}\right)\left(1-\frac{8}{k\log n}\right)^{-1}\frac{Cn^{1/2}}{\log n}  \\ & \leq &  C\frac{n^{1/2}}{\log n},  \end{eqnarray*}
where in the second to last inequality we used $$\log(1-x)=-(x+\frac{x^2}{2}+\frac{x^3}{3}+\cdots)>-(x+x^2+x^3+\cdots)>-2x$$ for $0<x<1/2$, which holds with $x=4/k$ from $k \geq C \geq e^8$, and in the last inequality we used $n \geq C \geq e^8$. We therefore have $H(n) \leq Cn^{1/2}/\log n$, which completes the proof. \qed

\section{Tools and the idea of the proof}
\label{sec:tools}

The proof of Theorem \ref{main1}
makes use of four main tools that we describe in this section. In the end of the section we outline the proof of Theorem \ref{main1}
using these tools.

Our first tool is a theorem independently proved by Bollob\'as and Thomason
\cite{BT}, and Koml\'os and Szemer\'edi \cite{KS}. They 
determined up to a constant factor the
minimum number of edges which guarantees a $K_t$-subdivision in a graph on $n$
vertices, solving  an old
conjecture made by Erd\H{o}s and Hajnal, and also by Mader.

\begin{theorem}\label{BTKS} {\bf (Bollob\'as-Thomason, Komlos-Szemer\'edi)}
Every graph $G$ with $n$ vertices and at least $256t^2 n$ edges
satisfies $\sigma(G) \geq t$.
\end{theorem}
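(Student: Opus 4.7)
The plan is a two-stage reduction. In the first stage I would pass from $G$ to a subgraph with large minimum degree: since $G$ has at least $256t^{2}n$ edges, its average degree is at least $512t^{2}$, and the standard peeling argument (repeatedly delete any vertex of current degree less than $256t^{2}$) cannot strip away all the edges, because doing so would remove fewer than $256t^{2}n$ edges in total. Hence the process terminates with a nonempty subgraph $H$ satisfying $\delta(H) \geq 256t^{2}$. From here on I work entirely inside $H$, so only a minimum-degree hypothesis is needed.

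In the second stage I construct a $K_{t}$-subdivision in $H$ by the usual branch-vertex / linking strategy. Choose $t$ prospective branch vertices $u_{1},\dots,u_{t}$ and, for each $u_{i}$, reserve $t-1$ private neighbors $w_{i,j}$ ($j\neq i$), all of these $t(t-1)$ vertices being distinct; with $\delta(H) \geq 256t^{2}$ this reservation step is trivial (greedy selection works, since at each step the set of already-chosen reserved vertices has size less than $t^{2} \leq \delta/256$). Then process the $\binom{t}{2}$ pairs in an arbitrary order, finding for each pair $\{i,j\}$ an internally disjoint path $P_{ij}$ from $w_{i,j}$ to $w_{j,i}$, avoiding all other reserved vertices and all internal vertices of previously constructed paths. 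The resulting configuration is a $K_{t}$-subdivision with branch set $\{u_{1},\dots,u_{t}\}$.

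The key step is the path-routing claim: if $F \subseteq V(H)$ is a "forbidden" set that is not too large, then for any two vertices $x,y \notin F$ there is an $x\!-\!y$ path in $H$ whose internal vertices avoid $F$. This is where the minimum degree $\delta \geq 256t^{2}$ is used twice. First, a BFS from $x$ in $H\setminus F$ grows rapidly: since every vertex has $\delta$ neighbors and the forbidden set is small, successive BFS levels at least double (up to constants) until they exhaust the graph. Second, a bound on the total size of the forbidden set throughout the process — the $O(t^{2})$ reserved vertices plus the accumulated internal vertices of earlier paths — stays well below $\delta$, so BFS from any $w_{i,j}$ into $H\setminus F$ eventually meets BFS from $w_{j,i}$, producing the desired path. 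A rough accounting shows the paths can be taken of length $O(\log n / \log t)$, and $\binom{t}{2}$ such paths consume at most $O(t^{2}\log n/\log t)$ vertices, which together with the constant in $\delta \geq 256t^{2}$ gives enough slack.

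The main obstacle is making the expansion/BFS argument quantitatively correct: one must show that each BFS level in $H\setminus F$ at least a constant factor larger than the previous one, and this requires the forbidden set $F$ to remain strictly smaller than a fixed fraction of $\delta$ at every stage of the construction. Concretely, the delicate bookkeeping is to argue that the $i$-th path can be found when roughly $(i-1)$ previous paths have already been routed; this amounts to proving a lemma of the form "in a graph of minimum degree $\delta$, for any $F$ with $|F| \leq \delta/4$ the graph $H\setminus F$ still has a vertex connectivity or an expansion property strong enough to link any two specified vertices by a path of length $O(\log|V(H)|/\log \delta)$." Once this expansion lemma is in hand, plugging in $\delta \geq 256t^{2}$ and summing over the $\binom{t}{2}$ pairs closes the proof.
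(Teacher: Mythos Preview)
The paper does not prove this theorem: it is quoted as a known result of Bollob\'as--Thomason and Koml\'os--Szemer\'edi and used throughout as a black box. So there is no proof in the paper to compare against; what follows is an assessment of whether your outline would succeed on its own.

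There is a genuine gap, and it sits exactly where you flag the ``main obstacle.'' Passing to a subgraph $H$ with $\delta(H)\geq 256t^{2}$ is fine, but minimum degree alone gives neither expansion nor connectivity robust enough for the greedy linking step. For a concrete obstruction, let $H$ be two cliques of order $256t^{2}+1$ joined by a single edge: this has the required minimum degree, yet deleting one vertex disconnects it, so the proposed lemma ``for any $F$ with $|F|\le\delta/4$, any two vertices outside $F$ can be linked by a short path in $H\setminus F$'' is false as stated.

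Even if one imposes an expansion hypothesis strong enough to make BFS levels grow by a constant factor, the accounting does not close. You correctly note that paths then have length $O(\log|V(H)|/\log t)$, so $\binom{t}{2}$ of them consume on the order of $t^{2}\log|V(H)|/\log t$ vertices. For the forbidden set to stay below $\delta/4=64t^{2}$ you would need $\log|V(H)|=O(\log t)$, i.e.\ $|V(H)|\le t^{C}$ for an absolute constant $C$. But the peeling step gives no upper bound on $|V(H)|$; it may have up to $n$ vertices, with $n$ arbitrary relative to $t$, and then the routed paths overwhelm $\delta$. This is precisely the difficulty the actual proofs overcome: Koml\'os--Szemer\'edi first pass to a subgraph which is a genuine \emph{expander} of controlled size (so that diameter and vertex count are simultaneously bounded in terms of the degree), while Bollob\'as--Thomason go through a structural result that sufficiently high connectivity makes a graph \emph{linked}. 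Your plan is missing that extra reduction; without it the greedy routing cannot be completed, and an argument of this shape yields only a clique subdivision of order roughly $\sqrt{d/\log n}$ rather than $\sqrt{d}$.
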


We remark that Theorem \ref{BTKS} implies $H(n)=O(n^{1/2})$. Indeed,
a graph $G$ with chromatic number $k$ has a subgraph with minimum
degree at least $k-1$ and hence by Theorem \ref{BTKS} satisfies
$\sigma(G) =\Omega(k^{1/2})$. We thus get $\frac{\chi(G)}{\sigma(G)}
= O(k^{1/2})=O(n^{1/2})$. As this bound holds for all $G$ on $n$
vertices, we have $H(n)=O(n^{1/2})$.
Our goal is to prove the better bound $H(n)=O(n^{1/2}/\log n)$.

\medskip

The theorem above can be used as a black box to indirectly construct
clique subdivisions in certain cases. However, in order to directly
construct a large clique subdivision in a graph, we first
find a large subset in which only a small number of edges is
missing, and then for each such missing edge, find internally vertex-disjoint paths connecting the two endpoints. For technical reasons, we reverse the two steps. That is, we first find a large
subset of vertices such that every pair of vertices can be connected
by many internally vertex-disjoint paths (Lemma \ref{DRC}), and then find a
further subset in which only a small proportion of edges is missing
(Lemma \ref{snlem}). It would be nice if these two steps were sufficient in proving Theorem \ref{main}.
Unfortunately, a naive application of these two
steps together with Theorem \ref{BTKS} will only imply our main
result for a certain range of parameters, more precisely, when the
graph is dense enough depending on the independence number. Thus to
handle the case when the graph is sparse, we develop another lemma
(Lemma \ref{ES}), which essentially says that in the sparse case,
we can find a subgraph in which the parameters work (see the discussion
before Lemma \ref{ES}).

The next tool is based on a simple yet surprisingly powerful lemma whose proof uses a probabilistic argument known as dependent random choice. Early versions of this technique were developed in the papers \cite{Go,KoRo,Su1}. Later, variants were discovered and applied to various problems in Ramsey theory and extremal graph theory (see the survey \cite{FS} for more details). The following lemma says that every graph of large enough density contains a large subset in which every pair of vertices are connected by many internally vertex-disjoint paths.

\begin{lemma}\label{DRC}
Assume that $d$ and $n$ are given so that $d^2 n \ge 1600$. If
$G=(V,E)$ has $n$ vertices and edge density $d$, then there is a
vertex subset $U \subset V$ with $|U| \ge dn/ 50$ vertices such that
every pair of vertices in $U$ have at least $10^{-9} d^{5}n$
internally vertex-disjoint paths of length $4$ which uses only
vertices from $V \setminus U$ as internal vertices.
\end{lemma}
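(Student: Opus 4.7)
The plan is to apply dependent random choice (DRC). The argument will proceed in two stages: first, construct $U$ via a random sampling procedure; second, greedily exhibit many internally vertex-disjoint length-$4$ paths between each pair in $U$.

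For the first stage, I would sample a small random subset $S \subseteq V$ (a handful of vertices, perhaps uniformly with replacement) and define a candidate set $U_0$ in terms of $S$, for instance, the common neighborhood $\bigcap_{s\in S} N(s)$ or the vertices satisfying a neighborhood-density condition relative to $S$. A first-moment computation using Jensen's inequality applied to the convex map $x \mapsto x^{|S|}$ shows that, with positive probability, $|U_0|$ is $\Omega(dn)$, and in parallel that only a small fraction of pairs in $U_0$ fail to have $\Omega(d^2 n)$ common neighbors in $V \setminus U_0$. After deleting the vertices involved in too many ``bad'' pairs (a standard Markov-type deletion), the remaining set $U$ has $|U| \geq dn/50$ and the property that every pair $u_1, u_2 \in U$ has a large codegree in $V \setminus U$.

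For the second stage, fix any $u_1, u_2 \in U$. A length-$4$ path $u_1 - a - b - c - u_2$ is parametrized by a triple $(a,b,c)$ with $a \in N(u_1)$, $c \in N(u_2)$, and $b \in N(a) \cap N(c)$. I would count the triples $(a,b,c)$ with $a,b,c \in V \setminus U$ via
\[
\sum_{b \in V \setminus U} \bigl|N(u_1) \cap N(b) \cap (V\setminus U)\bigr| \cdot \bigl|N(u_2) \cap N(b) \cap (V\setminus U)\bigr|,
\]
and invoke the codegree lower bounds from the first stage, together with the observation that $|U| \leq dn/50$ keeps essentially all of $N(u_1)$ and $N(u_2)$ in $V \setminus U$, to lower bound this sum by $\Omega(d^4 n^2)$. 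To convert a large total count into many \emph{internally vertex-disjoint} paths, I would iterate a greedy selection, removing the three internal vertices of each chosen path. Since the number of counted triples containing any fixed vertex is on the order of $O(d^3 n)$ (the typical figure; any outliers need to be excised during the cleanup in the first stage), this greedy procedure can be carried out at least $\Omega(d^5 n)$ times, which gives the claimed bound after tracking absolute constants.

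The main obstacle I expect is calibrating the DRC parameters so that $|U|$ scales as $\Omega(dn)$ (rather than as a higher power of $d$, which is what a naive DRC with many witnesses would give) while the codegree control inside $U$ is still strong enough to produce $\Omega(d^5 n)$ vertex-disjoint paths. A related technical challenge is the greedy extraction step: guaranteeing \emph{internally vertex-disjoint} paths (as opposed to merely many paths counted with multiplicity) requires an upper bound on the number of length-$4$ walks through any single vertex, which needs to be arranged during or after the DRC step so that removing a handful of used internal vertices does not destroy the entire candidate pool.
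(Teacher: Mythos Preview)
Your approach is the paper's approach (dependent random choice with a single random vertex, clean out bad pairs, then count length-$4$ paths and greedily extract disjoint ones), but the second stage as you describe it has a genuine gap.

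The codegree lower bound you obtain in stage one applies only to pairs of vertices that both lie in the DRC-selected set $X$ (of which $U$ is the cleaned subset). In your sum
\[
\sum_{b \in V \setminus U} \bigl|N(u_1) \cap N(b) \cap (V\setminus U)\bigr| \cdot \bigl|N(u_2) \cap N(b) \cap (V\setminus U)\bigr|,
\]
the middle vertex $b$ ranges over all of $V \setminus U$, and for a generic such $b$ you have no control whatsoever on $|N(u_i)\cap N(b)|$; the claimed bound $\Omega(d^4 n^2)$ does not follow from ``pairs in $U$ have large codegree''. The fix, and what the paper does, is to restrict $b$ to $X \setminus U$. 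Because $u_1,u_2$ were retained as \emph{non-bad} vertices of $X$, each of them forms a good pair with all but at most $|X|/4$ vertices of $X$, so there are $\Omega(|X|)=\Omega(dn)$ choices of $b \in X\setminus U$ with both $|N(u_1)\cap N(b)|$ and $|N(u_2)\cap N(b)|$ at least $\Omega(d^2 n)$. This gives $\Omega(d^5 n^3)$ length-$4$ paths, not $\Omega(d^4 n^2)$. (The paper also first passes to a bipartition $V_1\cup V_2$ so that these common neighbors automatically avoid $X\subset V_1$; you will need some device like this to ensure $a,c\in V\setminus U$.)

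Your greedy extraction is also harder than necessary. You do not need any bound of the form ``each internal vertex lies in $O(d^3 n)$ triples'', which would require degree control you have not established. It suffices to observe that any fixed length-$4$ path shares an internal vertex with at most $3n^2$ other such paths (fix one of its three internal vertices; the remaining two can be anything). Hence $\Omega(d^5 n^3)/(3n^2)=\Omega(d^5 n)$ internally vertex-disjoint paths can be selected greedily, with no excision of high-degree outliers needed.
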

\begin{proof}
Let $V_1$ be a random subset of $V$ of size $\lceil n/2 \rceil$
and let $V_2=V \setminus V_1$. Then it is easy to check that 
$\BBE[e(V_1, V_2)] \ge \frac{d}{2} {n \choose 2}$ and therefore we may pick such a partition $V=V_1 \cup V_2$ with $e(V_1, V_2) 
\ge 
\frac{d}{2} {n \choose 2}$. Throughout the proof we restrict
our graph to the bipartite graph induced by the edges between $V_1$
and $V_2$.

Pick a vertex $v_0 \in V_2$ uniformly at random,
and let $X \subset V_1$ be the neighborhood of $v_0$. The
probability of a fixed vertex $v \in V_1$ belonging to $X$ is
$\mathbb{P}(v \in X) = \deg(v)/|V_2|$ and thus by the Cauchy-Schwarz
inequality and $n\ge 2$,
\[ \BBE[|X|^2] \ge \BBE[|X|]^2 = \left( \sum_{v \in V_1} \frac{ \deg(v) }{|V_2|} \right)^2
  = \left(\frac{e(V_1, V_2)}{|V_2|}\right)^2 \ge \left(\frac{d(n-1)}{2}\right)^{2} \ge \frac{d^2n^2}{16}. \]
Call a pair $(v,w)$ of vertices in $V_1$ {\em bad} if $v$ and $w$ have at most $d^2 n / 800$ common neighbors, and call it {\em good} otherwise. Let $b$ be the number of bad pairs in $X$. Note that for
a bad pair $(v,w)$, the probability that both $v$ and $w$ belongs to
$X$ is at most $\mathbb{P}(v,w \in X) \le d^2 n/(800|V_2|) $.
Consequently, the expectation of $b$ is at most
\[ \BBE[b] \le \frac{d^2 n}{800|V_2|} {n \choose 2} \le \frac{d^2 n^2}{800}. \]
Thus we have
\[ \BBE[|X|^2 - 40 b] \ge \frac{d^2 n^2}{80}. \]
Therefore we have a choice of $v_0$ for which $|X|^2 - 40b \ge
d^2n^2 / 80 \ge 0$. Fix this choice of $v_0$ (and $X$). Note that
this in particular implies $|X| \ge dn / 10$ and $b \le |X|^2/40$.

Call a vertex in $X$ {\em bad} if it forms a bad pair with at least
$|X|/4$ vertices in $X$. By the bound on $b$, we know that there
are at most $|X|/5$ bad vertices in $X$. Let $U$ be an arbitrary
subcollection of non-bad vertices in $X$ of size $|X|/5 \ge dn/50$.
We claim that $U$ is a set which has all the claimed properties.

Since the vertices of $U$ form a bad pair with at most $|X|/4$
vertices in $X$, for every two distinct vertices $v,w$ in $U$, the
number of vertices in $X \setminus U$ with which both $v,w$ form a good pair is at least
\[ |X| - |U| - 2 \cdot \frac{|X|}{4} = \frac{3|X|}{10}. \]
Moreover, whenever we have a vertex $x$ which forms a good pair with
both $v$ and $w$, by the definition of a good pair, we can find at
least $(d^2n/800)(d^2n/800 - 1)$ paths of length 4 connecting $v$
and $w$ which uses only vertices from $V \setminus U$ as internal
vertices. Therefore by collecting the facts, we see that given $d^2n
\ge 1600$, the number of such paths of length 4 between $v$ and $w$
is at least
\[ \left( \frac{3|X|}{10} \right) \cdot \left(\frac{d^2 n}{800} \right)\left(\frac{d^2 n}{800} - 1\right)
\ge \frac{3d^5 n^3}{100 \cdot 800 \cdot 1600} \ge 10^{-8}d^5 n^3. \]

Note that interior (without endpoints) of any given path of length 4 connecting  $v$ and $w$ can 
intersect at most $3n^2$ other such paths. This implies that 
there are at least $10^{-8}d^5 n^3/(3n^2)\geq 10^{-9}d^5 n$ internally  vertex-disjoint paths
connecting $v$ and $w$ which uses only vertices from $V \setminus U$
as internal vertices. This completes the proof. 
\end{proof}

The following lemma asserts that every graph of small independence number contains a large subset in which only a small proportion of edges are missing.

\begin{lemma}\label{snlem}
Let $0<\rho < 1$ and $\alpha$ be a positive integer. 
Then for every positive
integer $s \le \lceil \rho^{\alpha -1} n \rceil$, every graph $G$ on
$n$ vertices with independence number at most $\alpha$ contains a
subset of size $s$ with at most $\rho s^2$ nonadjacent pairs of
vertices.
\end{lemma}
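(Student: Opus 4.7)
The plan is to proceed by induction on $\alpha$, with the base case $\alpha = 1$ handled trivially since then $G$ is a complete graph and any subset has zero nonadjacent pairs. For the inductive step, fix $\alpha \ge 2$, a graph $G$ on $n$ vertices with $\alpha(G) \le \alpha$, and a positive integer $s \le \lceil \rho^{\alpha-1} n \rceil$. I would split into two cases depending on whether some vertex has many non-neighbors.

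In the first case, suppose there exists a vertex $v$ whose non-neighborhood $B := V(G) \setminus (N(v) \cup \{v\})$ has $|B| \ge \rho n$. Any independent set in $G[B]$ together with $v$ is still independent in $G$, so $\alpha(G[B]) \le \alpha - 1$. Because $|B| \ge \rho n$, monotonicity of the ceiling gives $s \le \lceil \rho^{\alpha-1} n \rceil \le \lceil \rho^{\alpha-2} |B| \rceil$, so the inductive hypothesis applied to $G[B]$ produces a subset of size $s$ inside $B$ with at most $\rho s^{2}$ nonadjacent pairs, and this subset also serves $G$.

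In the opposite case, every vertex of $G$ has strictly fewer than $\rho n$ non-neighbors, so the total number of nonadjacent pairs in $G$ is strictly less than $\rho n^{2}/2$. Taking a uniformly random $s$-subset of $V(G)$, the expected number of nonadjacent pairs is at most
\[
\frac{\rho n^{2}}{2} \cdot \frac{\binom{s}{2}}{\binom{n}{2}} \;=\; \rho\,\frac{n}{n-1}\cdot\frac{s(s-1)}{2} \;\le\; \rho s^{2},
\]
where the last inequality uses $n/(n-1) \le 2$ for $n \ge 2$ (and the $n=1$ case is vacuous). Averaging yields a subset of size $s$ meeting the required bound.

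I do not anticipate a serious obstacle: the case split is designed so that exactly the threshold $\rho n$ feeds the inductive hypothesis, and the looseness of bounding $\rho s(s-1)$ by $\rho s^{2}$ absorbs the $n/(n-1)$ normalization in the random-subset argument. The only mild subtlety is ensuring the ceiling inequality $\lceil \rho^{\alpha-1} n \rceil \le \lceil \rho^{\alpha-2} |B| \rceil$ in Case~1, which follows immediately from $|B| \ge \rho n$ and the monotonicity of $\lceil \cdot \rceil$.
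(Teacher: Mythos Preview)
Your proposal is correct and takes essentially the same approach as the paper. The paper phrases the argument as an explicit descending chain $V_0 \supset V_1 \supset \cdots$, passing at each step to the non-neighborhood of a vertex with at least $\rho|V_i|$ non-neighbors (thereby dropping the independence number by one) until either every vertex has few non-neighbors or a clique is reached, and then averages down to size $s$; your induction on $\alpha$ with the same case split is exactly this iteration unwound, and your random-subset averaging in Case~2 is the same computation the paper performs.
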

\begin{proof}
If $s=1$, then the claim is clearly true. Thus we assume that $s \ge
2$. Let $t$ be an integer satisfying $t \ge s$. It suffices to find
a subset of order $t$ which has at most $\rho t^2 / 2$ nonadjacent
pairs, since by an averaging argument over all subsets of this $t$-set of order $s$, we can find a subset of order
$s$ which has at most
\[ \frac{\rho t^2}{2 {t \choose 2}} \cdot {s \choose 2} \le \rho s^2 \]
edges missing.

Let $V_0=V(G)$. We will find a sequence $V_0 \supset V_1 \supset \cdots$ of subsets such that the induced subgraph of $G$ with vertex set $V_i$ has independence number at most $\alpha-i$ and at least $\rho^{i}n$ vertices. Notice this is satisfied for $i=0$. If $V_i$ has a vertex which has at least $\rho|V_i|$ non-neighbors  in $V_i$, then let $V_{i+1} \subset V_i$ be the subset of non-neighbors, so $|V_{i+1}| \geq \rho|V_i|$. Since the induced subgraph of $G$ with vertex set $V_{i+1}$ has independence number at most $\alpha-i-1$, we can continue the induction. Otherwise, every vertex of $V_i$ has less than $\rho|V_i|$ non-neighbors, so there are less than $\rho |V_i|^2/2$ nonadjacent pairs in $V_i$, in which case we are done. If this process continues through $\alpha-1$ steps, we get a set $V_{\alpha-1}$ of order at least $\rho^{\alpha-1}n$, and independence number at most one, so this is a clique of order at least $s$, which completes the proof.
\end{proof}


Suppose we are trying to prove Theorem \ref{main1} for $\alpha < 2\log n$.
First apply Lemma \ref{BTKS} to find a subset $U$ of size $\Omega(dn)$ in which
each pair is connected by $\Omega(d^5n)$ internally vertex-disjoint paths of length $4$. Then apply
Lemma \ref{snlem} to $U$ with a suitable choice of $\rho$, and hope to find
a subset of size $\Omega(n^{\alpha/(2\alpha-1)})$, in which $O(d^5n)$ edges are missing.
By Lemma \ref{DRC} we can use internally vertex-disjoint paths of length 4 instead of missing edges to get 
a clique subdivision on these vertices. A crucial observation is that this only works if $U$ is large enough
(that is, if $d$ is large enough). 

Our next lemma can be used to
overcome this difficulty. The idea of this lemma first appeared in a
1972 paper of Erd\H{o}s-Szemer\'edi \cite{ES}, and has also been
useful in other problems (for example, \cite{Su}). It shows that if
a sparse graph does not have large independence number, then it contains an induced subgraph with many vertices whose independence number is much smaller then in the original graph. We will later see that with the help of this lemma, the strategy above can be modified to find a subset of
size $\Omega(n^{\alpha/(2\alpha-1)})$ with $O(d^5 n)$ non-adjacent 
pairs (we use the same strategy for $\alpha \ge 2\log n$).


\begin{lemma}\label{ES}
Let $0 < d \le 1$. Let $G$ be a graph, $I$ be a maximum
independent set of $G$ with $|I|=\alpha$, and $V_1$ be a vertex
subset of $V \setminus I$ with $|V_1|=N$ such that each vertex in $V_1$ has at most $d|I|$
neighbors in $I$. Then there is a subset $U \subset V_1$ with $|U| \geq
\left(\frac{e}{d}\right)^{-d\alpha}N$ such that the induced subgraph
of $G$ with vertex set $U$ has independence number at most
$d\alpha$.
\end{lemma}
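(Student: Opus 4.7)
The plan is to use the probabilistic technique that is standard in the Erd\H{o}s--Szemer\'edi framework: choose a random large subset $T$ of the maximum independent set $I$, and define $U$ to be the set of vertices in $V_1$ that have no neighbor in $T$. The two things to check are that $U$ is large in expectation and that forcing every vertex of $U$ to miss $T$ automatically caps $\alpha(G[U])$.

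First I would set $t = (1-d)\alpha$ (treating integrality casually as the paper permits) and pick a uniformly random $t$-subset $T \subseteq I$. Define
\[ U \;=\; \{\, v \in V_1 : N(v) \cap T = \emptyset \,\}. \]
The independence bound is immediate: if $J \subseteq U$ is independent in $G$, then $J \cup T$ is also independent, because $J$ has no internal edges, $T \subseteq I$ has no internal edges, and by construction no vertex of $J$ sees any vertex of $T$. Since $I$ is a \emph{maximum} independent set, $|J| + t \leq \alpha$, hence $|J| \leq \alpha - t = d\alpha$, so $\alpha(G[U]) \leq d\alpha$ as required.

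Next I would lower-bound $\BBE[|U|]$. For each $v \in V_1$ with $k = |N(v)\cap I| \leq d\alpha$,
\[ \BBE\bigl[\mathbf{1}[v \in U]\bigr] \;=\; \frac{\binom{\alpha-k}{t}}{\binom{\alpha}{t}} \;\geq\; \frac{\binom{(1-d)\alpha}{t}}{\binom{\alpha}{t}} \;=\; \binom{\alpha}{d\alpha}^{-1} \;\geq\; \left(\frac{e}{d}\right)^{-d\alpha}, \]
using the standard estimate $\binom{\alpha}{d\alpha} \leq (e/d)^{d\alpha}$ and the fact that the ratio of binomials is monotone decreasing in $k$ (so the worst case is $k = d\alpha$). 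Summing over $v \in V_1$ gives $\BBE[|U|] \geq (e/d)^{-d\alpha} N$, so some deterministic choice of $T$ realizes a set $U$ of at least this size with the required independence property.

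The argument is essentially immediate once one writes down the correct random object; the only genuine nuisance is integrality (rounding $t$ and $d\alpha$), but since the paper explicitly says it omits floors and ceilings and does not optimize constants, this will cause no real difficulty. The one place I would be mildly careful is verifying that the monotonicity bound $\binom{\alpha-k}{t}/\binom{\alpha}{t} \geq \binom{(1-d)\alpha}{t}/\binom{\alpha}{t}$ holds uniformly (i.e., that $k \leq d\alpha$ really is the worst case) and that $t = (1-d)\alpha \leq \alpha - k$ so the binomial coefficient is nonzero; both are clear from $k \leq d\alpha$.
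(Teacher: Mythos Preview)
Your proof is correct and essentially the same as the paper's: the paper uses pigeonhole over the $\binom{\alpha}{\lfloor d\alpha\rfloor}$ possible $\lfloor d\alpha\rfloor$-subsets of $I$ containing $N(v)\cap I$, which is just the deterministic phrasing of your averaging over the complementary $(1-d)\alpha$-subset $T$. The independence bound---adjoining the untouched $(1-d)\alpha$ vertices of $I$ to any independent $J\subseteq U$ and invoking maximality of $I$---is identical in both arguments.
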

\begin{proof}
For every vertex $v \in V_1$ fix a subset of $I$ of size $\lfloor d|I| \rfloor$ which contains 
all neighbors of $v$ in $I$. Since the number of such subsets of $I$ is at most
${|I| \choose \lfloor d|I| \rfloor} \leq \left(\frac{e}{d}\right)^{d \alpha}$, one of them contains neighborhoods of at least
$|V_1|\left(\frac{e}{d}\right)^{-d \alpha}$ vertices of $V_1$. 
Let $U \subset V_1$ be 
the set of these vertices. We have that $|U| \geq
\left(\frac{e}{d}\right)^{-d\alpha}N$, and the number of
vertices in $I$ which have a neighbor in $U$ is at most $d|I|$.

Note that all the vertices in $I$ which do not have a neighbor in $U$ can
be added to an independent subset of $U$ to make a larger independent set. Since $I$ is an independent set of size $\alpha$, there are at least $(1-d)\alpha$ such vertices. Moreover, since $G$ has independence number at most $\alpha$, the induced subgraph of $G$ on $U$ has independence number at most $d\alpha$.
\end{proof}

\subsection{Outline of the proof}

We next outline the proof of Theorem \ref{main1}, which gives a lower
bound on $\sigma(G)$ for a graph $G$ with $n$ vertices and independence number
$\alpha$. The proof strategy depends on whether or not the graph $G$
is dense.

When $G$ is dense the proof splits into two cases, depending on the size of  $\alpha$ 
(see Lemma \ref{dense} in the next section). If $\alpha \leq 2 \log n$, then we apply Lemma \ref{DRC} to
obtain a large vertex subset $U$ in which every pair of vertices in $U$ are
the endpoints of a large number of internally vertex-disjoint paths of
length $4$. We then apply Lemma \ref{snlem} to obtain a subset $S \subset
U$ of large order $s$ such that $G[S]$ has few missing edges. The vertices
of $S$ form the vertices of a $K_{s}$-subdivision. Indeed, for every pair
of adjacent vertices in $S$, we
use the edges between them as paths, and for every pair of non-adjacent
vertices, we use paths of length $4$ between them. These paths can be
chosen greedily using that each pair of vertices in $S$ are the endpoints
of many internally vertex-disjoint paths of length $4$ and there are few
missing edges within $S$.
This completes the case $\alpha \leq 2\log n$ of Lemma \ref{dense}. If
$\alpha>
2\log n$, using the fact $G$ is dense, we apply Theorem \ref{BTKS} to obtain
the desired large clique subdivision.

For sparse $G$ we prove a lower bound on $\sigma(G)$  in terms of the number
$n$ of vertices, the independence number $\alpha=\alpha(G)$, and the edge
density $d$ of $G$ (see Lemma \ref{sparse} in the next section). The proof is by induction on $n$,
the base case $n=1$ being trivial. The cases $d<n^{-1/4}$ or $\alpha \geq
n/16$ can be
trivially verified, so we may suppose $d>n^{-1/4}$ and $\alpha<n/16$. One
easily finds
an independent set $I$ and a vertex subset $V''$ which is disjoint 
from $I$ with $|V''| \geq n/8$ such that $I$ is a maximum independent 
set in $G[V'' \cup I]$ such that every vertex in $V''$ has at most $8d|I|$
neighbors in $I$. If $G[V'']$ has edge density at most $d/10$, then, by the
induction hypothesis, $G[V'']$, and hence $G$ as well,
contains a $K_s$-subdivision of the desired size. So we may suppose
$G[V'']$ has edge density at least $d/10$. Apply Lemma \ref{DRC} to find a
large subset $V_1 \subset V''$ such that
every pair of vertices in $V_1$ are the endpoints of a large number of
internally vertex-disjoint paths of length $4$. Applying Lemma \ref{ES},
we find a large
subset $U \subset V_1$ such that the independence number of $G[V_1]$ is
small. Finally, we then apply Lemma \ref{snlem} to obtain a subset $S
\subset U$ of large order $s$ such that $G[S]$ has few
missing edges. Just as in the dense case discussed above, the set $S$ form
the vertices of the desired $K_{s}$-subdivision.

\section{Proof of Theorem \ref{main1}}

In this section we prove Theorem \ref{main1} using the tools and
the strategy we developed in the previous section.
We separately consider two cases depending on the relation between
the edge density $d$ and the independence number $\alpha$ of the
graph. The following lemma establishes the case when the
graph is dense.

\begin{lemma}\label{dense}
Fix a constant $0 < c \leq 1$.
The following holds for every graph $G$ with $n \ge 10^{14}c^{-5}$
vertices, edge density $d$, and independence number $\alpha$.
\begin{enumerate}
\item[(i)] If $\alpha \le 2\log n$ and $d \ge c$,
then $\sigma(G) \ge 10^{-6} c^{5/2}
n^{\alpha/(2\alpha-1)}$.
\item[(ii)] If $\alpha = a \log n$ for some $a \ge 2$ and $d \ge c/(a \log a)$, then
$\sigma(G) \ge \sqrt{\frac{c}{600}} \sqrt{\frac{n}{a\log a}}$.
\end{enumerate}
\end{lemma}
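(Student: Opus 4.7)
The proof separates into the two stated cases, each using a different tool from Section \ref{sec:tools}. Part (ii) should follow directly from Theorem \ref{BTKS}: since $d \geq c/(a\log a)$, the graph has at least $\binom{n}{2} d \geq \frac{c n(n-1)}{2a \log a}$ edges, and for $n \geq 10^{14} c^{-5}$ this quantity exceeds $256 t^2 n$ with $t = \sqrt{c/600}\sqrt{n/(a\log a)}$ (the check reduces to $(n-1)/n \geq 512/600$, which is automatic). Theorem \ref{BTKS} then yields the required $K_t$-subdivision.

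Part (i) is the substantive case, for which I plan to follow the three-step recipe described right after Theorem \ref{BTKS}. First, apply Lemma \ref{DRC} with density $d \geq c$ (the hypothesis $n \geq 10^{14} c^{-5}$ ensures $d^2 n \geq 1600$) to obtain a subset $U \subseteq V(G)$ with $|U| \geq cn/50$ such that every pair of vertices in $U$ is joined by at least $P := 10^{-9} c^5 n$ internally vertex-disjoint paths of length $4$ whose interior vertices lie in $V \setminus U$. Second, apply Lemma \ref{snlem} to $G[U]$, whose independence number is inherited from $G$, with parameter $\rho$ defined by $\rho^{2\alpha-1} |U|^2 = P/3$, and set $s := \lfloor \rho^{\alpha-1}|U| \rfloor$. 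This choice forces $\rho s^2 \leq \rho^{2\alpha-1} |U|^2 = P/3$, and substituting $|U| \geq cn/50$ into
\[
s \;\geq\; (P/3)^{(\alpha-1)/(2\alpha-1)} \, |U|^{1/(2\alpha-1)} \;\geq\; \kappa(\alpha) \cdot c^{(5\alpha-4)/(2\alpha-1)} \cdot n^{\alpha/(2\alpha-1)}
\]
yields a numerical factor $\kappa(\alpha)$ built from $1/50$, $1/3$ and $10^{-9}$; an inspection of the endpoints ($\alpha = 1$ giving $\kappa = 1/50$, and $\alpha \to \infty$ giving $\kappa \to 3^{-1/2}\cdot 10^{-9/2} \approx 1.8\cdot 10^{-5}$) confirms $\kappa(\alpha) \geq 10^{-6}$ for all $\alpha \geq 1$. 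Together with $c^{(5\alpha-4)/(2\alpha-1)} \geq c^{5/2}$ (valid since $c \leq 1$ and the exponent is at most $5/2$), this gives the required $s \geq 10^{-6} c^{5/2} n^{\alpha/(2\alpha-1)}$.

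Third, I would assemble a $K_s$-subdivision whose branch vertices are the $s$ vertices from Lemma \ref{snlem}: adjacent pairs use the edge between them, and for each of the at most $\rho s^2$ non-adjacent pairs I greedily select an unused length-$4$ path from the reservoir supplied by Lemma \ref{DRC}. Any previously chosen path contributes $3$ internal vertices (which lie in $V \setminus U$, hence outside the branch set), and since the $P$ candidate paths between a fixed pair are internally vertex-disjoint, each used vertex blocks at most one candidate. Thus at most $3 \rho s^2 \leq P$ candidate paths are ever blocked, so a free path always exists and the construction completes.

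The main obstacle I anticipate is the joint calibration in the second step: the single parameter $\rho$ must simultaneously push $s = \rho^{\alpha-1}|U|$ up to order $n^{\alpha/(2\alpha-1)}$ while keeping $\rho s^2 = \rho^{2\alpha-1}|U|^2$ below the path budget $P = \Theta(c^5 n)$. Eliminating $\rho$ between these two relations forces $s^{2\alpha-1} \sim P^{\alpha-1}|U|$, which is precisely the identity that produces the characteristic exponent $\alpha/(2\alpha-1)$ in the conclusion; the remaining work is the routine but delicate bookkeeping needed to verify that the accumulated numerical constants stay above $10^{-6} c^{5/2}$ uniformly in $\alpha$.
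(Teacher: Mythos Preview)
Your proposal is correct and follows essentially the same approach as the paper: part (ii) is a direct application of Theorem~\ref{BTKS}, and part (i) runs Lemma~\ref{DRC} followed by Lemma~\ref{snlem} and a greedy path-selection, with $\rho$ calibrated so that $\rho^{2\alpha-1}|U|^2$ matches the path budget. The only cosmetic difference is that the paper fixes $\rho = (10^{-7}d^3 n^{-1})^{1/(2\alpha-1)}$ explicitly (and uses $s=\lceil\rho^{\alpha-1}|U|\rceil$, bounding $s^2 \le 2\rho^{2\alpha-2}|U|^2$ once $s\ge 10$), whereas you define $\rho$ implicitly and take a floor; both lead to the same estimate, and your monotonicity check for $\kappa(\alpha)$ is the routine bookkeeping the paper leaves implicit.
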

\begin{proof}
(i) Given a graph $G$ as in the statement of the lemma, 
since $d^2 n \ge 1600$, we can apply
Lemma~\ref{DRC} to get a vertex subset $U$ of size $dn/50$ such that
every pair of vertices in $U$ have at least $10^{-9}d^5 n$
internally vertex-disjoint paths of length $4$ between them whose internal vertices lie in $V \setminus U$.

We may assume $\alpha \geq 2$, as otherwise $\alpha=1$, $G$ is a clique, and $\sigma(G)=n$. By applying Lemma~\ref{snlem} to $U$ with $\rho =
\left(10^{-7}d^3n^{-1}\right)^{1/(2\alpha-1)}$ (note that $\rho
< 1$), we find a vertex subset $S \subset U$ of size
\[ s = \lceil \rho^{\alpha-1}|U| \rceil = \left\lceil (10^{-7}d^3)^{(\alpha-1)/(2\alpha-1)} \cdot \frac{d}{50} \cdot n^{\alpha/(2\alpha-1)} \right\rceil \geq 10^{-6}d^{5/2}n^{\alpha/(2\alpha-1)}\]
with at most
\[ \rho s^2 \le 2\rho^{2\alpha-1}|U|^2 =2 \cdot 10^{-7}d^3n^{-1} \cdot \left(dn/50\right)^2
  \le 10^{-10} d^5 n, \]
nonadjacent pairs, 
where we used the fact that $s^2 \le 2\rho^{2\alpha-2}|U|^2$, which follows 
from the inequality $s \ge 10^{-6}d^{5/2}n^{\alpha/(2\alpha-1)} \ge 10^{-6}d^{5/2}n^{1/2} \ge 10$
(recall that $n \ge 10^{14}c^{-5}$).

We claim that the vertices of $S$ form the vertices of a
$K_s$-subdivision. For every pair of adjacent vertices in $S$, we
use the edges between them as paths, and for every pair of
non-adjacent vertices, we use paths of length $4$ between them.
Since the number of non-adjacent pairs of vertices is at most $10^{-10} d^5 n$, and each such pair has at least $10^{-9} d^5n \ge 3 \cdot 10^{-10} d^5 n$ internally vertex-disjoint paths
of length $4$ connecting them which uses only vertices from $V \setminus S$ as
internal vertices, we can greedily pick one path for each
non-adjacent pair to construct a $K_s$-subdivision. Indeed, 
note that the use of a certain path of length $4$ can destroy at most 3 other such paths for each other non-adjacent pair since they have disjoint interiors.

\noindent (ii) Since $d \ge c/(a \log a)$, the total number
of edges in the graph is
\[ d {n \choose 2} \ge \frac{c(n-1)}{2a \log a} n. \]
Therefore by Theorem~\ref{BTKS}, we can find a $K_s$-subdivision
for $s$ satisfying
\[ s \ge \sqrt{\frac{c(n-1)}{512 a \log a}} \ge \sqrt{\frac{c}{600}} \sqrt{\frac{n}{a \log a}}. \]
\end{proof}

Let $f(n,\alpha,d)$ be the maximum $t$ such that every graph $G$
on $n$ vertices with independence number at most $\alpha$ and edge
density $d$ contains a $K_t$-subdivision. First note that
by Tur\'an's theorem, we have a lower 
bound on $d$ in terms of $\alpha$.

\begin{proposition} \label{TURAN}
Let $G$ be a graph with $n$ vertices, edge density $d$, and independence number $\alpha$.
If $\alpha \le n/2$, then $d \ge 1/(2\alpha)$.
\end{proposition}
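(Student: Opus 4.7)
The plan is to apply Tur\'an's theorem to the complement graph. The hypothesis $\alpha(G) \le \alpha$ is equivalent to saying that $\bar{G}$ contains no clique of size $\alpha+1$, i.e., $\omega(\bar{G}) \le \alpha$. By Tur\'an's theorem applied to $\bar{G}$, we therefore have $e(\bar{G}) \le \left(1-\tfrac{1}{\alpha}\right)\tfrac{n^2}{2}$.

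Passing back to $G$, this yields
\[
e(G) \;=\; \binom{n}{2} - e(\bar{G}) \;\ge\; \frac{n(n-1)}{2} - \frac{(\alpha-1)n^2}{2\alpha} \;=\; \frac{n(n-\alpha)}{2\alpha},
\]
after a one-line algebraic simplification. Dividing by $\binom{n}{2}$ gives the edge density bound
\[
d \;\ge\; \frac{n-\alpha}{\alpha(n-1)}.
\]
Finally, the hypothesis $\alpha \le n/2$ gives $n-\alpha \ge n/2$, so
\[
d \;\ge\; \frac{n/2}{\alpha(n-1)} \;=\; \frac{1}{2\alpha}\cdot\frac{n}{n-1} \;\ge\; \frac{1}{2\alpha},
\]
as desired.

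There is essentially no obstacle: the proposition is a routine consequence of Tur\'an's theorem, and the only computation to watch is that the factor $(n-\alpha)/(n-1)$ coming from comparing $e(G)$ to $\binom{n}{2}$ is at least $1/2$ under the assumption $\alpha \le n/2$, which is exactly where the $1/2$ in $1/(2\alpha)$ is consumed. (One could alternatively prove the bound directly by averaging: greedily $\alpha$-color $G$ via the fact that $\chi(G)\ge n/\alpha$ in the Tur\'an graph extremal configuration, but the complement argument above is the cleanest.)
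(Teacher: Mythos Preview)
Your proof is correct and is essentially the same as the paper's: both use Tur\'an's theorem to obtain $e(G)\ge \frac{n(n-\alpha)}{2\alpha}$ (the paper writes this as $\alpha\binom{n/\alpha}{2}$ via convexity, but it is the same quantity), and then the identical inequality $\frac{n-\alpha}{\alpha(n-1)}\ge\frac{1}{2\alpha}$ under $\alpha\le n/2$. The only cosmetic difference is that you phrase Tur\'an's theorem in its classical form on the complement $\bar G$, while the paper states the lower bound on $e(G)$ directly.
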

\begin{proof}
By Tur\'an's theorem and convexity of the function $g(x)={x \choose 2}$, we know
$d {n \choose 2} \ge \alpha {n/\alpha \choose 2} = n(n/\alpha-1)/2$,
from which it follows that
$d \ge \frac{(n/\alpha-1)}{n-1} \ge \frac{1}{2\alpha}$.
\end{proof}

The next lemma
establishes a good bound on $f(n,\alpha,d)$ when $d$ is small
(depending on $\alpha$). This can be used to handle the remaining
case of the proof of Theorem \ref{main1}.

\begin{lemma} \label{sparse}
Let $G$ be a graph with $n$ vertices, edge density $d$, and
independence number $\alpha$.  If $\alpha \le n/2$, $d \le 10^{-20}$, 
and $d\alpha \log(1/d) \le (\log n) /100$, then we have 
$f(n, \alpha, d) \ge (1/50)d^4n^{\frac{1}{2}+\frac{1}{40d\alpha}}$.
\end{lemma}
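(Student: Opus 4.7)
The plan is to prove the bound by strong induction on $n$, with the base case covering all $n$ small enough that the target $\tfrac{1}{50}d^4n^{1/2+1/(40d\alpha)}$ is at most $1$ (trivially $\sigma(G)\ge 1$). Before starting the induction I dispose of two ``boundary'' regimes. Combining the hypothesis $d\alpha\log(1/d)\le(\log n)/100$ with Proposition \ref{TURAN} ($\alpha\le n/2$ gives $d\ge 1/(2\alpha)$), I get $\tfrac{1}{2}\log(1/d)\le d\alpha\log(1/d)\le(\log n)/100$, so automatically $d\ge n^{-1/50}\gg n^{-1/4}$. The case $\alpha\ge n/16$ is ruled out analogously: it would force $d\log(1/d)\le(16\log n)/(100n)$, contradicting $d\ge 1/(2\alpha)\ge 1/n$ for all sufficiently large $n$. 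So in the induction step I may assume $d>n^{-1/4}$ and $\alpha<n/16$.

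Next I preprocess $G$. Let $I$ be a maximum independent set of $G$, so $|I|=\alpha$. Since there are at most $d\binom{n}{2}$ edges between $V\setminus I$ and $I$, a Markov-type argument extracts $V''\subseteq V\setminus I$ with $|V''|\ge n/8$ such that every vertex of $V''$ has at most $8d|I|$ neighbors in $I$; the set $I$ remains a maximum independent set of $G[V''\cup I]$ because $\alpha(G[V''\cup I])\le\alpha$. I then branch on the edge density $d''$ of $G[V'']$. If $d''\le d/10$, I invoke the induction hypothesis on $G[V'']$ (one checks $d''\alpha\log(1/d'')\le(\log|V''|)/100$ and the other induction hypotheses carry over). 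The exponent of $n$ in the target grows from $1/(40d\alpha)$ to at least $10/(40d\alpha)$, while $|V''|$ shrinks by a factor at most $8$ and $d$ by a factor at most $10$; the hypothesis $d\alpha\log(1/d)\le(\log n)/100$ forces $d\alpha\le(\log n)/4600$, so the multiplicative gain $n^{9/(40d\alpha)}$ is astronomical and easily swamps all losses.

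Otherwise $d''\ge d/10$, and I run the main chain of tools, echoing Lemma \ref{dense}(i). Lemma \ref{DRC} applied to $G[V'']$ (density $d/10$, at least $n/8$ vertices) yields $V_1\subseteq V''$ with $|V_1|\ge dn/4000$ such that every pair of vertices in $V_1$ is connected by at least $10^{-9}(d/10)^5(n/8)$ internally vertex-disjoint length-$4$ paths through $V''\setminus V_1$. Lemma \ref{ES} applied with its ``$d$'' played by $8d$ and the same $I$ gives $U\subseteq V_1$ with $|U|\ge(8d/e)^{8d\alpha}|V_1|$ and $\alpha(G[U])\le 8d\alpha$. Here the hypothesis $d\alpha\log(1/d)\le(\log n)/100$ is crucial: it implies $(8d/e)^{8d\alpha}\ge n^{-0.08}$, so $|U|\ge dn^{0.92}/4000$. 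Finally I apply Lemma \ref{snlem} to $G[U]$ with independence number $\alpha':=8d\alpha$ and $\rho=(M/|U|^2)^{1/(2\alpha'-1)}$, where $M$ is a constant fraction of $10^{-9}(d/10)^5(n/8)$; this produces $S\subseteq U$ of size $s=\lceil\rho^{\alpha'-1}|U|\rceil$ with at most $\rho s^2\le M$ nonadjacent pairs, and, as in Lemma \ref{dense}(i), the vertices of $S$ support a $K_s$-subdivision built from existing edges and greedy length-$4$ paths for the nonadjacent pairs.

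The main obstacle is showing that $s\ge(1/50)d^4n^{1/2+1/(40d\alpha)}$ after this chain of shrinkings. The optimization yields $s\asymp M^{(\alpha'-1)/(2\alpha'-1)}|U|^{1/(2\alpha'-1)}$, whose exponent of $n$ is of the form $1/2+\Theta(1/(d\alpha))$; a direct algebraic check (using $d\alpha\log(1/d)\le(\log n)/100$ to control $\log|U|/\log n$) verifies this exponent always exceeds $1/2+1/(40d\alpha)$. The total power of $d$ in $s$ is at most $(5\alpha'-4)/(2\alpha'-1)\le 5/2$, so since $d\le 1$ one has $d^{5/2}\ge d^4$, and the prefactor $1/50$ safely absorbs the numeric constants from Lemmas \ref{DRC}, \ref{ES}, \ref{snlem}. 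The thresholds $d\le 10^{-20}$ and $1/100$ in $d\alpha\log(1/d)\le(\log n)/100$ are calibrated precisely so that this arithmetic closes with slack.
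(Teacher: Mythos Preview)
Your overall architecture matches the paper's: induction on $n$, a branch on whether the density of the preprocessed subgraph drops by a factor of $10$, and in the non-dropping case the chain Lemma~\ref{DRC} $\to$ Lemma~\ref{ES} $\to$ Lemma~\ref{snlem}. Your observation that the hypotheses already force $d\ge n^{-1/50}$ and $\alpha<n/16$ is correct and in fact lets you skip the paper's separate treatment of those boundary regimes.

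However, there is a genuine gap in your preprocessing step. You take $I$ to be a maximum independent set of the whole graph $G$ and claim that ``since there are at most $d\binom{n}{2}$ edges between $V\setminus I$ and $I$, a Markov-type argument extracts $V''\subseteq V\setminus I$ with $|V''|\ge n/8$ such that every vertex of $V''$ has at most $8d|I|$ neighbors in $I$.'' But this Markov bound only says that the number of vertices with more than $8d|I|=8d\alpha$ neighbors in $I$ is at most
\[
\frac{d\binom{n}{2}}{8d\alpha}=\frac{\binom{n}{2}}{8\alpha}\approx\frac{n^2}{16\alpha},
\]
which is \emph{larger} than $n$ whenever $\alpha<n/16$ --- precisely the regime you have established. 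So the argument gives no information, and you cannot conclude $|V''|\ge n/8$.

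The paper repairs this by an extra step you omitted: it first passes to $V'=\{v:\deg_G(v)\le 2dn\}$, which has $|V'|\ge n/2$, and takes $I$ to be a maximum independent set of $G[V']$. Then every vertex of $I$ has degree at most $2dn$, so the number of edges out of $I$ is at most $2dn|I|$ rather than $d\binom{n}{2}$, and the Markov bound gives at most $2dn|I|/(8d|I|)=n/4$ bad vertices in $V'$, leaving $|V''|\ge n/2-n/4-|I|\ge n/8$. This degree-restriction trick is essential; without it Lemma~\ref{ES} cannot be applied with the parameter $8d$ as you intend.
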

\begin{proof}
The proof is by induction on $n$. Note that by $\alpha \le n/2$ and
Proposition~\ref{TURAN}, we have $d \ge 1/(2\alpha)$. Thus we
only need to consider the range $1/(2\alpha) \le d \le 1$.

We first verify some initial cases. Namely, we prove that 
if either $d < n^{-1/4}$ or $\alpha > n/16$ holds, then the bound
$f(n, \alpha, d) \ge (1/50)d^4n^{\frac{1}{2}+\frac{1}{40d\alpha}}$ is true.

If $d < n^{-1/4}$, then by the fact $d \ge 1/(2\alpha)$ we have
\[ (1/50)d^{4} n^{\frac{1}{2} + \frac{1}{40d\alpha}} \le n^{-1} n^{\frac{1}{2} + \frac{1}{20}} \le 1. \]
Therefore the statement is true in this case by the trivial bound $f(n,\alpha,d) \ge 1$.
From now on, we may assume that $d\ge n^{-1/4}$, from which
it follows that $n \ge d^{-4} \ge 10^{80}$.

If $\alpha > n/16$ and $d \ge n^{-1/4}$, then by applying Theorem~\ref{BTKS} we can
find a clique subdivision of order at least
\[ \sqrt{d{n \choose 2}/{256n}} = \sqrt{\frac{d(n-1)}{512}} \ge \sqrt{\frac{dn}{600}}. \]
We claim that this is larger than $(1/50)d^4n^{\frac{1}{2}+\frac{1}{40d\alpha}}$. Indeed, it suffices to show that
$(1/50) d^4n^{\frac{1}{40d\alpha}} \le \sqrt{d/600}$,
which is implied by 
\[  d^{7/2}n^{\frac{1}{40d\alpha}} \le 2. \]
In the range $n^{-1/4} \le d \le 1$ with $\alpha > n/16$, the left-hand side is an increasing function and hence maximized at $d=1$. When $d=1$ we have
$d^{7/2} n^{\frac{1}{40d\alpha}} \le n^{\frac{2}{5n}} < 2$. This finishes the proof of the initial cases.

Now assume that some $n$ is given and the lemma has been
proved for all smaller values of $n$. By the observations above,
we may also assume that $d \ge 1/(2\alpha)$, $\alpha \le n/16$, and $d \ge n^{-1/4}$, which implies $n \ge 10^{80}$. 
Let $G$ be a graph on $n$ vertices with edge density $d$,
and independence number at most $\alpha$. Let $V' \subset V(G)$ be
the set of vertices of degree at most $2dn$, so $|V'| \geq n/2$.

Let $I$ be a maximum independent set in the induced subgraph of $G$
with vertex set $V'$. Note that $|I| \leq \alpha$. Let $X \subset V'$ be
the set of vertices with at least $8d|I|$ neighbors in $I$, and 
let $V'' = V' \setminus (X \cup I)$.
Then by counting the number of edges incident to vertices of $I$
in two different ways, we get
\[ |X| \cdot 8d|I| \le |I| \cdot 2dn, \]
from which we get the bound $|X| \le n/4$.
Therefore, we get $|V''| \geq |V'| - |X|  -|I|\geq n/8$.
Let $n' = |V''| \ge n/8$, and let $d'$ be the edge density and 
$\alpha'$ be the independence number, respectively, of the graph $G[V'']$.
Note that $\alpha' \le \alpha$. 

\noindent {\bf Case 1} : $d' \le d/10$.

We want to apply the inductive hypothesis in this case. 
We first show that the new parameters $n', d', \alpha'$ satisfy
all the imposed conditions.
First we have $\alpha' \le \alpha \le n/16 \le n'/2$, and second
$d' \le d \le 10^{-20}$. Finally, since $t\log (1/t)$ is increasing for $t \leq e^{-1}$,  
\[ d'\alpha' \log (1/d') \le \frac{d}{10}\alpha \log(10/d) \le \frac{d \alpha \log(1/d)}{2} \le \frac{\log n}{200} \le \frac{\log (n')}{100}. \]

Thus we can use the 
induction bound $\sigma(G) \ge f(n', \alpha',d') \ge \frac{1}{50}(d')^4 (n')^{\frac{1}{2} + \frac{1}{40d'\alpha'}}$.
Let $d' = qd$, so $q \leq 1/10$. As $\alpha' \leq \alpha$ note that
\begin{align*}
 f(n',\alpha',d') &\ge \frac{1}{50}(d')^4 (n')^{\frac{1}{2} + \frac{1}{40d'\alpha'}} 
\ge \frac{1}{50} (d^4n^{\frac{1}{2} + \frac{1}{40d\alpha}}) (q^4 n^{\frac{1}{40d\alpha}(\frac{1}{q}-1)}/8) \\
&= \frac{1}{400}\big(d^4n^{\frac{1}{2} + \frac{1}{40d\alpha}}\big) \big(e^{\frac{\log n}{40d\alpha}(\frac{1}{q}-1) - 4 \log (\frac{1}{q})}\big).
\end{align*}
Since $40 d\alpha \le d\alpha \log(1/d) \le \log n/ 100$, we have that
$\frac{\log n}{40d\alpha}(\frac{1}{q}-1) - 4 \log (\frac{1}{q}) \geq 100(\frac{1}{q}-1) - 4 \log (\frac{1}{q})$
and the right-hand side of the above displayed inequality is minimized when $q$ is maximized. By using $q \leq 1/10$ we see that
\[ \sigma(G) \ge f(n', \alpha', d') \ge d^4n^{\frac{1}{2} + \frac{1}{40d\alpha}}. \]

\noindent {\bf Case 2} : $d' > d/10$.

Note that by the inequalities $d \ge n^{-1/4}$ and $n \ge 10^{80}$,
we have 
\[ (d')^2n' \ge d^2n/800 \ge n^{1/2}/800 \ge 1600. \]
Therefore we can apply Lemma \ref{DRC} to $G[V'']$ and find
a subset $V_1 \subset V''$ with
$|V_1| \ge d'n'/50 \ge dn/4000$ and the property that 
every pair of vertices in $V_1$ has at least
$10^{-9}(d')^5n' \ge 10^{-15}d^5n$ internally vertex-disjoint paths of length $4$ between them, which only use vertices in $V'' \setminus V_1$. Then by Lemma \ref{ES}, we get a subset
$U \subset V_1$ with (note that $d \le 10^{-20}$ and $d\alpha \ge 1/2$)
\[ |U| \geq \left(\frac{e}{8d}\right)^{-8d\alpha}|V_1| \ge 
\left(\frac{e}{8d}\right)^{-8d\alpha}\frac{dn}{4000} \ge e^{-15d\alpha \log(1/d)} n= d^{15d \alpha} n \]
such that the induced subgraph of $G$ with vertex set $U$ has
independence number at most $\beta := \lfloor 8d\alpha \rfloor \geq 4$. Redefine $U$ as
an arbitrary subset of size $u = \lfloor e^{-15d\alpha \log(1/d)}n \rfloor$.
Note that $u \ge  e^{-15d\alpha \log(1/d)}n/2$ since
$d\alpha \log(1/d) \le (\log n) /100$. 

Let $\rho = (d^{(6-30d\alpha)}n^{-1})^{1/(2\beta-1)}$. We have $\rho < 1$ from $d \leq 1$, $d\alpha \log(1/d) \le (\log n) /100$, and $\beta \geq 4$, since
\[ \rho^{2\beta-1} = d^{(6-30d\alpha)}n^{-1} \le d^{-30d\alpha}n^{-1} = e^{30d\alpha\log(1/d)}n^{-1} \le n^{3/10} \cdot n^{-1} < 1. \]
By applying Lemma \ref{snlem} with this value of $\rho$ 
to the graph $G[U]$, we get a subset $S$ of size
$s := \lceil \rho^{\beta-1}u \rceil $ with at most $\rho s^2$ non-adjacent
pairs. Note that if $s = 1$, then $S$ contains no non-adjacent pairs, and if $s \ge 2$, then
$S$ contains at most $\rho s^2 \le 4\rho^{2\beta-1} u^2$ non-adjacent pairs.
In any case, $S$ has at most $4\rho^{2\beta-1}u^2$ non-adjacent pairs. 
By definition, $\beta=\lfloor 8d\alpha \rfloor\geq 8d\alpha -1$ and therefore
$15d\alpha \leq 2\beta+2$.
Thus, the number of vertices in $S$ is at least
\[ s \ge \rho^{\beta - 1}u \ge (d^{(6-30d\alpha)}n^{-1})^{(\beta-1)/(2\beta-1)} \cdot(d^{15d\alpha}n/2)
= \frac{1}{2}d^{\frac{6\beta-6+15d\alpha}{2\beta-1}} n^{\frac{1}{2} + \frac{1}{2(2\beta - 1)}} \ge \frac{1}{2} d^4n^{\frac{1}{2} + \frac{1}{40d\alpha}} \]
and the number of non-adjacent pairs in $S$ is at most
\[ \rho s^2 \le 4\rho^{2\beta - 1} u^2 
\le 4(d^{6-30d\alpha} n^{-1})(d^{30d\alpha}n^2) = 4d^6 n. \]

The vertices of $S$ form the vertices of a $K_s$-subdivision,
where we use the edges between them as paths, and for the pairs in
$S$ that are not adjacent, we use paths of length $4$ between them.
We can greedily pick these paths of length $4$ as there are at most 
$4d^6n$ edges missing, and each pair of
vertices have $10^{-15}d^5n \ge 3 \cdot 4 d^6 n$ internally vertex-disjoint paths of
length $4$ between them, where we used the fact that $d \le 10^{-20}$. This completes the proof. 
\end{proof}

Note that the lemma above is no longer true if we completely remove
the restriction $\alpha \le n/2$. For example, if $\alpha = n-1$, then
we can have $d = 1/{n \choose 2}$, for which we have 
$d^4 n^{1/2 + 1/(40d\alpha)} \gg n$. The conclusion of the lemma is clearly impossible in this case since the
total number of vertices is $n$.

The proof of Theorem \ref{main1} easily follows from the two lemmas
above.

\begin{proofof}{Theorem \ref{main1}}
Let $G$ be a graph with $n$ vertices, edge density $d$,
and independence number $\alpha$. Let $c = 10^{-20}$ 
and $c_1=c_2=c'=10^{-114}$. 
If $n \le 10^{14}c^{-5} = 10^{114}$, then we have
$\sigma(G) \ge 1 \ge c'n$.  Thus we assume that $n > 10^{14}c^{-5}$. 

\noindent {\bf Case 1}: $\alpha \le 2\log n$.

If $d \ge c$, then by
the first part of Lemma \ref{dense} we have 
$\sigma(G) \ge (10^{-6}c^{5/2})n^{\alpha/(2\alpha-1)} \ge c'n^{\alpha/(2\alpha-1)}$. 

On the other hand if $d \le c$, then
we have $d\alpha \log(1/d) \le (\log n)/1000$ and $\alpha \le 2\log n \le n/2$. 
Lemma \ref{sparse} therefore implies 
\[ \sigma(G) \ge \frac{1}{50} d^4 n^{\frac{1}{2} + \frac{1}{40d\alpha}}. \]
From the inequality $d\alpha \log(1/d) \le (\log n)/1000$, we have $d^4 n^{1/(80d\alpha)} = e^{-4\log(1/d) + (\log n)/(80d\alpha)} \ge e^{-4\log(1/d) + (25/2)\log(1/d)}>1$, and thus
the above is at least
\[ \frac{n^{\frac{1}{2} + \frac{1}{80d\alpha}}}{50}  \ge \frac{n^{\frac{1}{2} + \frac{1}{\alpha}} }{50}
\ge \frac{n^{\frac{\alpha}{2\alpha-1}}}{50}  \ge c'n^{\alpha/(2\alpha-1)}. \]

\noindent {\bf Case 2}: $\alpha = a\log n$ for some $a>2$.

If $d \ge \frac{c}{a \log a}$, then by
the second part of Lemma \ref{dense} we have 
\[ \sigma(G) \ge \sqrt{\frac{c}{600}} \sqrt{\frac{n}{a\log a}} \ge c'\sqrt{\frac{n}{a\log a}}. \]

Thus we may assume that $d \le \frac{c}{a\log a}$.
If $\alpha > n/2$, then $a > n/(2\log n)$ and thus $c' \sqrt{n/(a\log a)} < 1$. 
Therefore we trivially have $\sigma(G) \ge 1 > c'\sqrt{n/(a\log a)}$ in this case.

Otherwise, $\alpha \le n/2$ and $d \le \frac{c}{a\log a}\leq c$ since $a \geq 2$. We
also have $d\alpha \log(1/d) \le (\log n)/100$. Indeed, as $t \log (1/t)$ is increasing for $t \leq e^{-1}$, to verify 
this inequality one can substitute $d =\frac{c}{a\log a}$ and $\alpha=a \log n$. By Lemma \ref{sparse},
\[ \sigma(G) \ge \frac{1}{50} d^4 n^{\frac{1}{2} + \frac{1}{40d\alpha}}. \]
Note that as in Case 1 we have $d^4 n^{1/(80d\alpha)} = e^{-4\log(1/d) + \log n/(80d\alpha)} >1$, and thus
the above is at least
\[ \frac{n^{\frac{1}{2} + \frac{1}{80d\alpha}}}{50}  \ge \frac{n^{\frac{1}{2}}}{50}  \ge c' \sqrt{\frac{n}{a \log a}}. \]
\end{proofof}

\section{Concluding Remarks} 

In this paper we established the conjecture of Erd\H{o}s and Fajtlowicz that $\frac{\chi(G)}{\sigma(G)} \leq 
\frac{Cn^{1/2}}{\log 
n}$ for every graph $G$ on $n$ vertices. The main part of the proof is Theorem \ref{main1}, which gives a lower bound on 
$f(n,\alpha)$, the minimum of $\sigma(G)$ over all graphs $G$ on $n$ vertices with $\alpha(G) \leq \alpha$. It would be 
interesting to determine the order of growth of $f(n,\alpha)$. As remarked in the introduction, determining $f(n,\alpha)$ is 
equivalent to the following Ramsey-type problem. Determine the minimum $n$ for which every red-blue edge-coloring of $K_n$ 
contains a red subdivision of $K_s$ or a blue $K_{\alpha+1}$.

Theorem \ref{main1} and the remarks afterwards determines the order of growth of $f(n,\alpha)$ for $\alpha=2$ and 
$\alpha=\Theta(\log n)$. We conjecture that the lower bound $f(n,\alpha) \geq c_1n^{\frac{\alpha}{2\alpha-1}}$ 
is tight up to a constant factor also for all $\alpha<2\log n$.
As in the case $\alpha =2$, to prove such a result
it would be sufficient to find a $K_{\alpha+1}$-free graph $G$ on $n$ vertices in which every subset of order 
$Cn^{\frac{\alpha}{2\alpha-1}}$ contains at least $n$ edges. Then the complement of $G$ will have independence number at most
$\alpha$ and no clique subdivision of order $Cn^{\frac{\alpha}{2\alpha-1}}$ (the proof is as in the case $\alpha=2$, see 
the introduction). A potential 
source of such graphs $G$ are $(n,d,\lambda)$-graphs, introduced by Alon. An $(n,d,\lambda)$-graph is a $d$-regular graph on $n$ 
vertices for which $\lambda$ is the second largest in absolute value eigenvalue of its adjacency matrix. An 
$(n,d,\lambda)$-graph $G$ which is $K_{\alpha+1}$-free with $d=\Omega\left(n^{1-\frac{1}{2\alpha-1}}\right)$ and 
$\lambda=O\left(\sqrt{d}\right)$ would satisfy the desired property by the expander mixing lemma (see, e.g., Theorem 2.11 in the 
survey \cite{KrSu}). It is worth mentioning that this would be up to a constant factor the densest 
$(n,d,\lambda)$-graph with $\lambda=O(\sqrt{d})$ which is $K_{\alpha+1}$-free (see, e.g., Theorem 4.10 of \cite{KrSu}). The 
construction of Alon \cite{Al} in the case $\alpha=2$ is the only known example of such graphs. We think it would be quite 
interesting to find examples for larger $\alpha$, which would have other applications as well.

We make the following conjecture on the order of the largest clique subdivision which one can find in a graph with chromatic 
number $k$. 
\begin{conjecture}\label{ourconjecture} There is a constant $c>0$ such that every graph $G$ with chromatic number $\chi(G)=k$ 
satisfies $\sigma(G) \geq c\sqrt{k \log k}$. \end{conjecture}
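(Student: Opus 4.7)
Write $k=\chi(G)$. My plan is to reduce to a situation where Theorem \ref{main} can be invoked with a favorable number of vertices. Specifically, if one can find a subgraph $G'\subseteq G$ with $\chi(G')\ge k/2$ and $|V(G')|\le Ck\log k$ for some absolute constant $C$, then Theorem \ref{main} applied to $G'$ gives
\[
\sigma(G)\;\ge\;\sigma(G')\;\ge\;\frac{\chi(G')\log|V(G')|}{C'\sqrt{|V(G')|}}\;=\;\Omega\!\left(\sqrt{k\log k}\right),
\]
which is the desired bound. So the task reduces to producing such a subgraph, or bypassing its existence via a direct argument.

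First I would replace $G$ by a $k$-critical subgraph, so that every vertex has degree at least $k-1$, and induct on $n=|V(G)|$. Base cases $n\le Ck\log k$ are handled by the calculation above. For the inductive step with $n>Ck\log k$ we have $\alpha(G)\ge n/k>C\log k$, so large independent sets are available. The natural move is to remove a maximum independent set $I$ and apply the induction to $G-I$, but $|I|$ is only guaranteed to be $\ge n/k$, giving a multiplicative reduction of $(1-1/k)$ per step in $n$ while losing one from $\chi$ per step. This is a factor $\log k$ too slow to bring $n$ down to $O(k\log k)$ before $\chi$ falls below $k/2$, and is the central difficulty.

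An alternative is to apply Theorem \ref{main1} directly: when $\alpha(G)\ge 2\log n$, part (ii) yields $\sigma(G)\ge c_2\sqrt{n/(a\log a)}$ with $a=\alpha/\log n$; using only the trivial lower bound $\alpha\ge n/k$ this gives $\sigma(G)=\Omega(\sqrt{k})$, missing a factor $\sqrt{\log k}$. To close the gap, one could try to extract from $G$ a subgraph $G''$ on $\Omega(k\log k)$ vertices with independence number $O(\log|V(G'')|)$, combining Lemma \ref{ES} (to reduce $\alpha$) with a density-increment step using Lemma \ref{DRC}; Theorem \ref{main1}(i) applied to such $G''$ would then deliver the desired $K_{\sqrt{k\log k}}$-subdivision.

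The main obstacle is precisely this $\sqrt{\log k}$ factor: the weaker bound $\sigma(G)\ge c\sqrt{k-1}$ already follows from Theorem \ref{BTKS} applied to the min-degree-$(k-1)$ subgraph, and improving it requires exploiting a genuinely chromatic feature of $G$ that is not captured by average degree or independence number alone. Informally, the conjecture asserts that $k$-chromatic graphs behave no worse for $\sigma$ than random graphs of matching chromatic number, and a complete proof would likely need a stability-type statement to the effect that any $k$-chromatic graph on $n\gg k\log k$ vertices must contain a small dense chromatic-rich subgraph---an assertion that appears to require ideas beyond those developed in this paper.
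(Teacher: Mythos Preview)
The statement you are attempting to prove is labeled \emph{Conjecture} in the paper, and the paper does not prove it. In the concluding remarks immediately following the conjecture, the authors explicitly write that the techniques developed in the paper ``do not appear sufficient to solve Conjecture~\ref{ourconjecture}, as in this conjecture one needs to handle clique subdivisions in rather sparse graphs.'' So there is no proof in the paper to compare against.

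Your proposal is not a proof either, and you correctly diagnose why. Your reduction---find $G'\subseteq G$ with $\chi(G')\ge k/2$ and $|V(G')|=O(k\log k)$, then apply Theorem~\ref{main}---is sound \emph{if} such a $G'$ can be found, and your calculation that this would yield $\sigma(G)=\Omega(\sqrt{k\log k})$ is correct. You also correctly observe that iteratively deleting maximum independent sets reduces $n$ by a factor $(1-1/k)$ per step while dropping $\chi$ by one, which only brings $n$ down to $\Theta(n e^{-1})$ after $k/2$ steps rather than to $O(k\log k)$. Likewise, your check that Theorem~\ref{main1}(ii) with the trivial bound $\alpha\ge n/k$ yields only $\sigma(G)=\Omega(\sqrt{k})$ is accurate, as is the remark that Theorem~\ref{BTKS} applied to the $(k-1)$-degenerate subgraph already gives $\Omega(\sqrt{k})$.

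In short, your proposal is an honest account of why the paper's machinery falls exactly a factor $\sqrt{\log k}$ short, and this assessment agrees with the authors' own. The conjecture remains open; what is missing is not a gap in your argument but a genuinely new idea---precisely the ``stability-type statement'' you gesture at in your final paragraph, or something playing an equivalent role.
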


The bound in Conjecture \ref{ourconjecture} would be best possible by considering a random graph of order $O(k\log k)$. Recall 
the result of of Bollob\'as and Thomason \cite{BT} and Koml\'os and Szemer\'edi \cite{KS} which says that every graph $G$ of 
average degree $d$ satisfies $\sigma(G)=\Omega(\sqrt{d})$. This is enough to imply the bound $\sigma(G) =\Omega(\sqrt{k})$ for 
$G$ with $\chi(G)=k$, but not the extra logarithmic factor. Much of the techniques developed in this paper to solve Theorem 
\ref{main} are most useful in rather dense graphs. These techniques do not appear sufficient to solve Conjecture 
\ref{ourconjecture}, as in this conjecture one needs to handle clique subdivisions in rather sparse graphs.

\vspace{0.1cm} 
\noindent {\bf Acknowledgments.} We would like to thank Noga Alon for helpful discussions.
We would also like to thank the two anonymous referees for their valuable comments.

\end{document}